\documentclass[11pt]{amsart} 
\usepackage{amsmath,amssymb,bm, color}

\usepackage{amsmath}
\usepackage{graphicx,float} 

\usepackage{mathrsfs}
\usepackage{bbm}
\usepackage{bm}
\usepackage{amsfonts,amssymb}
\usepackage{multirow}
\usepackage{lineno}
\usepackage{color}
 \usepackage{float} 
 \usepackage{algorithm}
\usepackage{algpseudocode}
\numberwithin{equation}{section}
 \newtheorem{theorem}{Theorem}[section]
 \newtheorem{lemma}[theorem]{Lemma}
 
 \newtheorem{corollary}[theorem]{Corollary}
\def\3bar{{|\hspace{-.02in}|\hspace{-.02in}|}}

\def\cal#1{{\mathcal #1}}

\def\bf{{\mathbf{f}}}

\newtheorem{remark}{Remark}[section]

\numberwithin{equation}{section}

\def\3bar{{|\hspace{-.02in}|\hspace{-.02in}|}}

 \def\cal#1{\mathcal{#1}}

\begin{document}

\title []
 {A Neural-Enhanced Weak Galerkin Method for Second-Order Elliptic Problems with Low-Regularity Solutions}

  \author {Chunmei Wang}
  \address{Department of Mathematics, University of Florida, Gainesville, FL 32611, USA. }
  \email{chunmei.wang@ufl.edu}

\begin{abstract}
We propose a neural-enhanced weak Galerkin (WG) finite element method for second-order elliptic problems with low-regularity solutions. The method augments the classical WG approximation space with neural network functions constructed via a residual-driven Galerkin enrichment procedure. This approach preserves the variational structure, symmetry, and stability of the WG formulation while enhancing its ability to approximate non-smooth and singular solution components.
We establish a quasi-optimal error estimate in a discrete WG energy norm, incorporating both projection and consistency errors. In particular, the method retains optimal convergence rates for smooth solutions. For problems admitting a regular--singular decomposition, we further show that the neural enrichment effectively captures the singular component, yielding improved accuracy over standard WG methods. 
\end{abstract}

\keywords{weak Galerkin method, neural enrichment, low-regularity solutions,   elliptic equations}

\subjclass[2010]{65N30, 65N15, 65N12, 65N20}
 
\maketitle
  
 \section{Introduction}

The numerical approximation of partial differential equations (PDEs) with low-regularity solutions remains a central challenge in scientific computing. Such problems arise naturally in domains with geometric singularities, such as re-entrant corners, or in the presence of discontinuous coefficients and interface conditions. In these settings, the solution may fail to belong to $H^{k+2}(\Omega)$, leading to reduced convergence rates for classical finite element methods based on polynomial approximation spaces.

Weak Galerkin (WG) finite element methods have emerged as a robust and flexible framework for solving PDEs on general polytopal meshes. By employing discrete weak derivatives and allowing discontinuities across element boundaries, WG methods are particularly well suited for handling complex geometries and heterogeneous media. Moreover, WG methods admit a natural variational structure and can achieve optimal-order convergence under appropriate regularity assumptions; see, e.g., \cite{wg1, wg2, wg3, wg4, wg5, wg6, wg7, wg8, wg9, wg10, wg11, wg12, wg13, wg14,   wg15, wg16, wg17, wg18, wg19, wg20, wg21, itera, wy3655}  and the references therein.

However, like other polynomial-based discretization methods, WG methods are inherently limited in their ability to approximate solutions with strong singularities. When the exact solution exhibits low regularity, the convergence rate deteriorates, and achieving high accuracy requires either mesh refinement or enrichment of the approximation space.

Recent advances in deep learning have demonstrated that neural networks possess remarkable approximation capabilities, particularly for functions with localized singularities or non-smooth features. This has led to the development of a variety of neural network-based methods for PDEs, including physics-informed neural networks and neural operator approaches \cite{PINN1, PINN2, FNO}. Despite their flexibility, these methods often lack the stability, structure, and rigorous convergence theory associated with classical Galerkin methods.

This work will combine neural networks with variational discretization methods in a structure-preserving manner.  
 The objective of the present work is   to develop a neural-enhanced WG method that is   computationally effective for low-regularity problems. Our approach augments the WG finite element space with neural network functions that are constructed adaptively through a residual-driven procedure. At each enrichment step, a neural function is selected to approximately maximize the residual in the WG energy norm, and the approximation is updated by solving a Galerkin problem in the enriched space. This leads to a sequence of approximations that progressively capture components of the solution not well represented by polynomial basis functions.

The proposed method has several important features:
1) The neural enrichment is formulated entirely within the WG variational framework, preserving symmetry, stability, and Galerkin orthogonality.
2) The enrichment procedure is adaptive and residual-driven, identifying directions of maximal error reduction.
3) The method is naturally compatible with polytopal meshes and nonconvex domains.
3) The neural component acts as a complementary approximation mechanism, particularly effective for singular or low-regularity features.

We establish rigorous error estimates for the neural-enhanced WG method. In particular, we prove a quasi-optimal error bound in the WG energy norm of the form
\[
\|Q_h u - u_h^{(m)}\|_{a_w}
\leq
C (\inf_{v \in W_h^m} \|Q_h u - v\|_{a_w}
+ h^{k+1}\|u \|_{H^{k+2}(\Omega)}),
\]
where $W_h^m$ is the augmented space.

Furthermore, for solutions admitting a decomposition $u = u_r + u_s$, where $u_s$ is a singular component that can be efficiently approximated by neural networks, we show that the error satisfies
\[
\|Q_h u - u_h^{(m)}\|_{a_w}
\leq 
C(h^{k+1} \|u_r\|_{H^{k+2}(\Omega)} + \varepsilon_m),
\]
where $\varepsilon_m$ is the neural approximation error of the singular component. This result demonstrates that the neural enrichment effectively overcomes the limitation imposed by low regularity.
 
 The remainder of the paper is organized as follows. In Section 2, we review the weak Galerkin formulation for second-order elliptic problems. Section 3 introduces the neural-enhanced WG method and the residual-driven enrichment strategy. In Section 4, we establish error estimates, including quasi-optimality and improved bounds for low-regularity solutions.  

\section{Weak Galerkin Method}

In this section, we briefly review the weak Galerkin (WG) finite element method for second-order elliptic problems. 

Let $\Omega \subset \mathbb{R}^d$ ($d=2,3$) be a bounded polygonal or polyhedral domain. We consider the elliptic problem
\begin{equation}\label{model}
-\nabla \cdot (a \nabla u) = f \quad \text{in } \Omega,
\qquad
u = 0 \quad \text{on } \partial\Omega,
\end{equation}
where $a(x)$ is a symmetric, uniformly positive definite matrix, and $f \in L^2(\Omega)$.

The weak formulation reads: find $u \in H_0^1(\Omega)$ such that
\begin{equation*}\label{weak}
(a \nabla u, \nabla v) = (f,v), \quad \forall v \in H_0^1(\Omega),
\end{equation*}
where $H_0^1(\Omega)=\{v\in H^1(\Omega): v|_{\partial T}=0 \}$.

Let $\mathcal{T}_h$ be a shape-regular partition of $\Omega$ consisting of polygons (2D) or polyhedra (3D), and let $\mathcal{E}_h$ denote the set of all edges/faces. For each $T \in \mathcal{T}_h$, let $h_T$ denote the diameter of $T$, and define $h = \max_T h_T$.

Let $k\ge 0$  be integers. For each $T \in \mathcal{T}_h$, define the local weak function space
\[
V(T) = \{ v = \{v_0, v_b\} : v_0 \in P_k(T), \; v_b \in P_k(e), \; e \subset \partial T \}.
\]

Patching the local spaces $V(T)$ together through a common value $v_b$ on the interior edges/faces gives the global WG space as follows; i.e.,
\[
V_h = \{ v = \{v_0, v_b\} : v|_T \in V(T), \ \forall T \in \mathcal{T}_h \}.
\]

The subspace with homogeneous boundary condition is
\[
V_h^0 = \{ v \in V_h : v_b = 0 \ \text{on } \partial\Omega \}.
\]

The discrete weak gradient $\nabla_w v$ for $v\in V_h$ is defined locally on each $T \in \mathcal{T}_h$ as the unique polynomial in $[P_{k-1} (T)]^d$ satisfying
\begin{equation*}\label{weak-grad}
(\nabla_w v, \mathbf{q})_T
=
-(v_0, \nabla \cdot \mathbf{q})_T
+
\langle v_b, \mathbf{q} \cdot \mathbf{n} \rangle_{\partial T},
\quad
\forall \mathbf{q} \in [P_{k-1}(T)]^d.
\end{equation*}

To enforce weak continuity, define the stabilizer
\begin{equation*}\label{stabilizer}
s(u,v)
=
\sum_{T \in \mathcal{T}_h}
h_T^{-1}
\langle  u_0 - u_b, \;  v_0 - v_b \rangle_{\partial T}.
\end{equation*}

Define the bilinear form
\begin{equation*}\label{bilinear}
a_w(u,v)
=
\sum_{T \in \mathcal{T}_h}
(a \nabla_w u, \nabla_w v)_T
+
s(u,v).
\end{equation*}

 The WG finite element method is: find $u_h \in V_h^0$ such that
\begin{equation*}\label{wg}
a_w(u_h, v) = (f, v_0), \quad \forall v \in V_h^0.
\end{equation*}

Let $Q_0$ and $Q_b$ denote the $L^2$ projections onto $P_k(T)$ and $P_k(e)$ respectively. Define
$Q_h u = \{Q_0 u, Q_b u\}$.
Let $\mathbb{Q}_h$ denote the $L^2$ projection onto $[P_{k-1}(T)]^d$.

A key commutativity property \cite{wy3655} is
\begin{equation}\label{commute}
\nabla_w(Q_h u) = \mathbb{Q}_h(\nabla u), \quad \forall u \in H^1(T).
\end{equation}

For  any $v=\{v_0, v_b\}\in V_h^0$, we define the discrete WG norm
\[
\|v\|_{1,h}^2
:=
\sum_{T \in \mathcal{T}_h} \|\nabla_w v\|_T^2
+h_T^{-1}\|v_0-v_b\|_{\partial T}^2.
\]

The bilinear form $a_w(\cdot,\cdot)$ is continuous and coercive on $V_h^0$ \cite{wy3655}, i.e.,
\[
|a_w(u,v)| \le C \|u\|_{1,h}\|v\|_{1,h},
\quad
a_w(v,v) \ge c \|v\|_{1,h}^2.
\]

We   define the WG energy norm by
\[
\|v\|_{a_w} := a_w(v,v)^{1/2},
\]
which is equivalent to $\|\cdot\|_{1,h}$ on $V_h^0$.

For $u \in H^{k+2}(\Omega)$, the following error estimate holds  \cite{wy3655}:
\begin{equation*}\label{approx}
\|u_h - Q_h u\|_{a_w}
\le C h^{k+1} \|u\|_{H^{k+2}(\Omega)}.
\end{equation*}
\section{Neural-Enhanced Weak Galerkin Method}

In this section, we introduce a neural-enhanced weak Galerkin (WG) method by augmenting the classical WG finite element space with neural network functions. The enrichment is constructed through a residual-driven Galerkin procedure, allowing the approximation space to adaptively capture components of the solution that are not well represented by polynomial basis functions.

Let $\widetilde{\mathcal N}_M$ be a class of feedforward neural networks. To enforce homogeneous boundary conditions, we define
\[
\mathcal N_M 
:=
\left\{
n(x)=\phi(x)\,\tilde n(x) : \tilde n \in \widetilde{\mathcal N}_M
\right\}
\subset H_0^1(\Omega),
\]
where $\phi \in H_0^1(\Omega)$ satisfies $\phi=0$ on $\partial\Omega$.

For each $n \in \mathcal N_M$, we define its lifting into the weak Galerkin space by
\begin{equation}\label{lifting}
\widehat{n} := \{ Q_0 n, \; Q_b n \} \in V_h^0.
\end{equation}

Define the lifted neural space
\[
\widehat{\mathcal N}
:=
\{ \widehat{n} : n \in \mathcal N_M \}
\subset V_h^0.
\]

\begin{remark}
The lifting operator maps neural functions into the WG finite element space while preserving homogeneous boundary conditions. Consequently, the enriched space remains a subspace of $V_h^0$ and is compatible with the WG formulation.
\end{remark}

Let $\{n_1,\dots,n_m\} \subset \mathcal N_M$ and define
\[
\widehat{n}_i := \{Q_0 n_i, Q_b n_i\}, \quad i=1,\dots,m.
\]
Set
\[
Z_h^m := \operatorname{span}\{\widehat{n}_1,\dots,\widehat{n}_m\},
\qquad
W_h^m := V_h^0 + Z_h^m.
\]
Then $W_h^m \subset V_h^0$.

The neural-enhanced WG approximation is defined as follows: find $u_h^{(m)} \in W_h^m$ such that
\begin{equation}\label{wg-neural}
a_w(u_h^{(m)}, v) = (f, v_0), 
\qquad \forall v \in W_h^m.
\end{equation}

When $m=0$, this reduces to the standard WG method.

Let $u_h^{(m)} \in W_h^m$ be the current approximation. Define the residual functional
\begin{equation}\label{residual}
R_h(u_h^{(m)})(v)
:=
(f, v_0) - a_w(u_h^{(m)}, v),
\qquad \forall v \in V_h^0.
\end{equation}

We define the normalized residual indicator
\begin{equation}\label{indicator}
\eta_h(u_h^{(m)}, v)
:=
\frac{R_h(u_h^{(m)})(v)}{\|v\|_{a_w}},
\qquad v \in V_h^0 \setminus \{0\}.
\end{equation}

Let $e_h = u - u_h^{(m)}$ denote the error. Then
\[
R_h(u_h^{(m)})(v) = a_w(e_h, v),
\]
and hence
\[
\eta_h(u_h^{(m)}, v)
=
\frac{a_w(e_h, v)}{\|v\|_{a_w}}.
\]
By the Cauchy--Schwarz inequality,
\[
\eta_h(u_h^{(m)}, v) \le \|e_h\|_{a_w},
\]
with equality when $v=e_h$. Therefore, maximizing $\eta_h$ identifies the dominant error direction.

We select the next neural basis function $\widehat{n}_{m+1} \in \widehat{\mathcal N}$ as an approximate maximizer of $\eta_h(u_h^{(m)}, v)$ over $\widehat{\mathcal N}$.

Let $n_\theta \in \mathcal N_M$ be a neural network parameterized by $\theta$, and define
\[
\widehat{n}_\theta = \{Q_0 n_\theta, Q_b n_\theta\}.
\]

We approximate the maximization problem by solving
\begin{equation}\label{loss}
\mathcal{J}(\theta)
:=
\frac{(f, Q_0 n_\theta) - a_w(u_h^{(m)}, \widehat{n}_\theta)}
{\|\widehat{n}_\theta\|_{a_w}}.
\end{equation}

\begin{remark}
The objective functional satisfies
\[
\mathcal{J}(\theta)
=
\eta_h(u_h^{(m)}, \widehat{n}_\theta),
\]
and thus maximizing $\mathcal{J}(\theta)$ approximates the optimal residual direction within the neural space.
\end{remark}

\begin{algorithm}[htbp]
\caption{Neural-Enhanced Weak Galerkin Method}
\label{alg:neural-wg}
\begin{algorithmic}[1]
\State Compute $u_h^{(0)} \in V_h^0$
\For{$m = 0,1,2,\dots$}
    \State Train $n_\theta \in \mathcal N_M$ to approximately maximize $\mathcal{J}(\theta)$
    \State Set $\widehat{n}_{m+1} := \{Q_0 n_\theta,\; Q_b n_\theta\}$
    \State Define $W_h^{m+1} := W_h^m + \operatorname{span}\{\widehat{n}_{m+1}\}$
    \State Compute $u_h^{(m+1)} \in W_h^{m+1}$ such that
    \[
    a_w(u_h^{(m+1)}, v) = (f, v_0),
    \quad \forall v \in W_h^{m+1}
    \]
    \If{$\eta_h(u_h^{(m)}, \widehat{n}_{m+1}) < \mathrm{tol}$}
        \State \textbf{break}
    \EndIf
\EndFor
\end{algorithmic}
\end{algorithm}

The proposed method preserves the variational structure and stability of the WG formulation while introducing adaptive enrichment through neural basis functions. The enrichment step is guided by a residual maximization principle, which targets the dominant error component and enhances the approximation of low-regularity solution features.

\section{Error Analysis}

In this section, we establish error estimates for the neural-enhanced weak Galerkin method.

Let $u$ be the exact solution of the model problem \eqref{model} and $u_h^{(m)} \in W_h^m$ the neural-enhanced WG solution for \eqref{wg-neural}. Define
\[
e_h := Q_h u - u_h^{(m)}.
\]
Then
\[
u - u_h^{(m)} = (u - Q_h u) + e_h.
\]

Define
\[
\ell_u(v) := a_w(Q_h u, v) - (f, v_0), \qquad v \in V_h^0.
\]

For simplicity, we assume that $a$ is a  piecewise constant in what follows of this paper. The analysis can be generalized to   a piecewise smooth function $a$ without difficulty.

\begin{lemma}For $u\in H^{k+2}(\Omega)$, we have
\begin{equation}\label{consistency-est}
|\ell_u(v)|
\le
C h^{k+1}\|u\|_{H^{k+2}(\Omega)}\|v\|_{1,h},
\qquad \forall v\in V_h^0.
\end{equation}  
\end{lemma}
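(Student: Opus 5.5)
The plan is to derive an explicit error-functional identity for $\ell_u(v)$ by testing the weak gradient definition with $a\nabla_w(Q_h u)$ and integrating by parts. Each resulting term will then be bounded by Cauchy--Schwarz, trace inequalities and projection approximation estimates. I expect one step, the order count in the final estimates, not to close as the statement is written; it is isolated in the last paragraph.

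\textbf{Step 1: the identity.} Fix $v=\{v_0,v_b\}\in V_h^0$. By the commutativity property \eqref{commute}, $\nabla_w(Q_hu)=\mathbb{Q}_h\nabla u$. Since $a$ is piecewise constant, $\mathbf{q}=a\,\mathbb{Q}_h\nabla u\in[P_{k-1}(T)]^d$ is an admissible test function in the definition of $\nabla_w v$. Integrating $-(v_0,\nabla\cdot\mathbf{q})_T$ by parts gives
\[
(a\nabla_w Q_hu,\nabla_w v)_T=(a\,\mathbb{Q}_h\nabla u,\nabla v_0)_T-\langle v_0-v_b,\ a\,\mathbb{Q}_h\nabla u\cdot\mathbf{n}\rangle_{\partial T}.
\]
Because $\nabla v_0\in[P_{k-1}(T)]^d$ and $a$ is constant on $T$, the projection can be dropped in the volume term, which becomes $(a\nabla u,\nabla v_0)_T$.

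On the other hand, I test \eqref{model} with $v_0$ and integrate by parts elementwise. The terms $\langle v_b,a\nabla u\cdot\mathbf{n}\rangle_{\partial T}$ cancel after summation over $\mathcal{T}_h$, by continuity of the flux and $v_b=0$ on $\partial\Omega$. This gives
\[
(f,v_0)=\sum_T (a\nabla u,\nabla v_0)_T-\langle v_0-v_b,\ a\nabla u\cdot\mathbf{n}\rangle_{\partial T}.
\]
Subtracting the two identities yields
\[
\ell_u(v)=\sum_{T}\langle a(\nabla u-\mathbb{Q}_h\nabla u)\cdot\mathbf{n},\ v_0-v_b\rangle_{\partial T}+s(Q_hu,v).
\]

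\textbf{Step 2: bounding the two terms.} For the flux term, I apply Cauchy--Schwarz with weights $h_T^{1/2}$ and $h_T^{-1/2}$. The factor $h_T^{-1/2}\|v_0-v_b\|_{\partial T}$ is controlled by $\|v\|_{1,h}$. For the other factor I use the trace inequality
\[
\|w\|_{\partial T}^2\le C\bigl(h_T^{-1}\|w\|_T^2+h_T\|\nabla w\|_T^2\bigr)
\]
with $w=\nabla u-\mathbb{Q}_h\nabla u$, which reduces it to projection error estimates for $\mathbb{Q}_h$.

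For the stabilizer, I use $Q_bQ_0u-Q_bu=Q_b(Q_0u-u)$ and the $L^2$-stability of $Q_b$ to get
\[
\|Q_0u-Q_bu\|_{\partial T}\le\|Q_0u-u\|_{\partial T}.
\]
Combining this with the same trace inequality and the approximation property of $Q_0$ on $P_k(T)$ gives a contribution bounded by $h^{k+1/2}\|u\|_{H^{k+2}(T)}$ times the stabilizer part of $\|v\|_{1,h}$, after including the $h_T^{-1/2}$ weight. Summing over $T$ and applying the discrete Cauchy--Schwarz inequality then gives the form of \eqref{consistency-est}.

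\textbf{Main obstacle: the order in the flux term.} As defined, $\mathbb{Q}_h$ projects onto $[P_{k-1}(T)]^d$. The standard estimate for that projection only yields $h^{k-1/2}|u|_{H^{k+1}(T)}$ on $\partial T$, hence order $h^k$ overall. The stabilizer contribution already has the required order, so the flux term is the only place where the gap arises. To reach $h^{k+1}\|u\|_{H^{k+2}}$ I would invoke the approximation estimate underlying the error bound quoted from \cite{wy3655} (the estimate $\|u_h-Q_hu\|_{a_w}\le Ch^{k+1}\|u\|_{H^{k+2}}$ stated at the end of Section 2). That estimate is consistent only if the weak gradient space is one polynomial degree higher, i.e.\ $\|\nabla u-\mathbb{Q}_h\nabla u\|_{\partial T}\le Ch_T^{k+1/2}|u|_{H^{k+2}(T)}$. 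I therefore expect the lemma to follow exactly as stated under that convention. This degree bookkeeping is the point I would verify against \cite{wy3655} before finalizing.
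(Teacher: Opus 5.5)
Your Step~1 is exactly the paper's identity \eqref{s1}, and your Step~2 follows the paper's route \eqref{s2}--\eqref{s3}. Your diagnosis of the flux term is correct: with $\mathbb{Q}_h$ mapping onto $[P_{k-1}(T)]^d$, the trace argument gives only $h^k|u|_{H^{k+1}(\Omega)}$, whereas \eqref{s2} simply asserts $h^{k+1}$.

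The genuine gap is your claim that the stabilizer term ``already has the required order.'' $Q_0$ is the $L^2$ projection onto $P_k(T)$, so $\|u-Q_0u\|_T\le Ch_T^{k+1}|u|_{H^{k+1}(T)}$ is the best available. The projection does not reproduce $P_{k+1}$, so $H^{k+2}$-regularity buys no extra power. Hence $\|Q_0u-Q_bu\|_{\partial T}\le\|Q_0u-u\|_{\partial T}\le Ch_T^{k+1/2}|u|_{H^{k+1}(T)}$. After the weight $h_T^{-1/2}$, the stabilizer contributes only $Ch^k|u|_{H^{k+1}(\Omega)}\|v\|_{1,h}$. Even the $h^{k+1/2}$ you wrote would fall short of $h^{k+1}$. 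The last inequality of \eqref{s3} makes the same overclaim. This term does not involve the weak-gradient space at all. So your repair (reading $\mathbb{Q}_h$ as the projection onto $[P_k(T)]^d$) fixes the flux term but leaves the stabilizer at order $h^k$, and the lemma does not follow under that convention.

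Nor can you close the gap by appealing to the estimate quoted at the end of Section~2. Since $a_w(u_h,v)=(f,v_0)$ for all $v\in V_h^0$, one has $\ell_u(v)=a_w(Q_hu-u_h,v)$. Testing with $v=Q_hu-u_h$ and using coercivity, the lemma is therefore \emph{equivalent} to $\|Q_hu-u_h\|_{a_w}\le Ch^{k+1}\|u\|_{H^{k+2}(\Omega)}$; it is the same claim, not an independent input.

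For the element as defined, what your argument (and the paper's) actually proves is $|\ell_u(v)|\le Ch^k\|u\|_{H^{k+1}(\Omega)}\|v\|_{1,h}$. This order is not improvable in general. Already in the one-dimensional analogue with $k=1$, eliminating $u_0$ shows that $u_b$ solves the linear finite element equations and is therefore nodally exact. Yet $\|Q_hu-u_h\|_{a_w}$ is of exact order $h$ whenever $u''\not\equiv 0$.

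The bound $h^{k+1}\|u\|_{H^{k+2}(\Omega)}$ is what one gets after raising every degree by one: $v_0\in P_{k+1}(T)$, $v_b\in P_{k+1}(e)$, $\nabla_w v\in[P_k(T)]^d$. For that element, your Steps~1 and~2 go through with both terms at the desired order.
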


\begin{proof}
Using  \eqref{commute}, we have
\[
a_w(Q_hu,v)
=
\sum_{T\in\mathcal T_h} (a \nabla_w Q_hu,\nabla_w v)_T
+
s(Q_hu,v)=\sum_{T\in\mathcal T_h} (a \mathcal{Q}_h\nabla   u,\nabla_w v)_T
+
s(Q_hu,v).
\]
By the definition of the discrete weak gradient and the usual integration by parts,
\begin{equation*}
    \begin{split}
     \sum_{T\in {\cal T}_h}   (a\mathcal{Q}_h\nabla u,\nabla_w v)_T
=&
  \sum_{T\in {\cal T}_h}-(v_0,\nabla\cdot(a\mathcal{Q}_h\nabla u))_T
+
\langle v_b,(a\mathcal{Q}_h\nabla u)\cdot n\rangle_{\partial T}\\
=& 
   \sum_{T\in {\cal T}_h}(\nabla v_0,  a\mathcal{Q}_h\nabla u )_T
+
\langle v_b-v_0,(a\mathcal{Q}_h\nabla u)\cdot n\rangle_{\partial T}\\
=& 
   \sum_{T\in {\cal T}_h}(\nabla v_0,  a\nabla u )_T
+
\langle v_b-v_0,(a\mathcal{Q}_h\nabla u)\cdot n\rangle_{\partial T}\\=&  \sum_{T\in {\cal T}_h} - (  v_0,  \nabla \cdot (a\nabla u) )_T+\langle v_0, a\nabla u \cdot n\rangle_{\partial T}
+
\langle v_b-v_0,(a\mathcal{Q}_h\nabla u)\cdot n\rangle_{\partial T}\\
=&  \sum_{T\in {\cal T}_h} ( f, v_0 )_T+\langle v_0-v_b, a\nabla u \cdot n\rangle_{\partial T}
+
\langle v_b-v_0,(a\mathcal{Q}_h\nabla u)\cdot n\rangle_{\partial T}\\
   =&   \sum_{T\in {\cal T}_h}( f, v_0 )_T+ 
\langle v_b-v_0,(\mathcal{Q}_h-I)a\nabla u \cdot n\rangle_{\partial T},
\end{split}
\end{equation*}
  where we used $  \sum_{T\in {\cal T}_h} \langle  v_b, a\nabla u \cdot n\rangle_{\partial T}=\langle  v_b, a\nabla u \cdot n\rangle_{\partial \Omega}=0$  due to $v_b=0$ on $\partial \Omega$.

Combining the above two identities  yields 
\begin{equation}\label{s1}
\ell_u(v)
=
 \sum_{T\in {\cal T}_h}
\langle v_b-v_0,(\mathcal{Q}_h-I)a\nabla u \cdot n\rangle_{\partial T}
+
s(Q_hu,v).
\end{equation}

Applying Cauchy-Schwarz inequality and trace inequality, gives
\begin{equation}\label{s2}
    \begin{split}
 &\sum_{T\in {\cal T}_h}
\langle v_b-v_0,(\mathcal{Q}_h-I)a\nabla u \cdot n\rangle_{\partial T}\\ 
\leq & \big(
\sum_{T\in\mathcal T_h} h_T^{-1}\|v_b-v_0\|_{\partial T}^2
\big)^{1/2} \big(
\sum_{T\in\mathcal T_h} h_T \|(\mathcal{Q}_h-I)a\nabla u \cdot n\|_{\partial T}^2
\big)^{1/2}\\
\leq & \|v\|_{1, h}\big(
\sum_{T\in\mathcal T_h}   \|(\mathcal{Q}_h-I)a\nabla u \cdot n\|_{T}^2+h_T^2 \|(\mathcal{Q}_h-I)a\nabla u \cdot n\|_{1,T}^2
\big)^{1/2}\\
  \le &C h^{k+1}\|u\|_{H^{k+2}(\Omega)}\|v\|_{1,h}.
\end{split}
\end{equation}

Applying Cauchy-Schwarz inequality and trace inequality, gives
\begin{equation}\label{s3}
    \begin{split}
&  |s(Q_hu,v)| \\
\le &
\big(
\sum_{T\in\mathcal T_h} h_T^{-1}\|Q_bu-Q_0u \|_{\partial T}^2
\big)^{1/2}
\big(
\sum_{T\in\mathcal T_h} h_T^{-1}\| v_0-v_b\|_{\partial T}^2
\big)^{1/2}\\
 \le &
\big(
\sum_{T\in\mathcal T_h} h_T^{-2}\| Q_0u-u \|_{T}^2+   \| Q_0u-u \|_{1, T}^2
\big)^{1/2}
\|v\|_{1,h} \\  \le& C h^{k+1}\|u\|_{H^{k+2}(\Omega)}\|v\|_{1,h}.
    \end{split}
\end{equation}
   
Substituting \eqref{s2} and \eqref{s3} into \eqref{s1} completes the proof.
\end{proof}

For all $v \in W_h^m$, we have
\[
a_w(e_h, v) = \ell_u(v).
\]

\begin{theorem}\label{thm:cea}
Let $u \in H^{k+2}(\Omega)$. Then
\[
\|Q_h u - u_h^{(m)}\|_{a_w}
\le
C \big(
\inf_{v \in W_h^m} \|Q_h u - v\|_{a_w}
+ h^{k+1} \|u\|_{H^{k+2}(\Omega)}
\big).
\]
\end{theorem}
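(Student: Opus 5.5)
The plan is the standard Strang-type argument, combining the perturbed Galerkin orthogonality $a_w(e_h,v)=\ell_u(v)$ for $v\in W_h^m$ (stated just before the theorem) with the consistency estimate \eqref{consistency-est}. Fix an arbitrary $v\in W_h^m$ and split
\[
Q_hu-u_h^{(m)}=(Q_hu-v)+(v-u_h^{(m)}),
\]
where $w:=v-u_h^{(m)}\in W_h^m\subset V_h^0$. By the triangle inequality, it suffices to bound $\|w\|_{a_w}$ by $\|Q_hu-v\|_{a_w}$ plus the consistency term, and then take the infimum over $v$.

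To bound $w$, test with $w$ itself:
\[
\|w\|_{a_w}^2=a_w(v-u_h^{(m)},w)=a_w(v-Q_hu,w)+a_w(e_h,w)=a_w(v-Q_hu,w)+\ell_u(w).
\]
For the first term, the Cauchy--Schwarz inequality for the symmetric positive semidefinite form $a_w$ gives $\|v-Q_hu\|_{a_w}\|w\|_{a_w}$. For the second, the lemma gives $|\ell_u(w)|\le Ch^{k+1}\|u\|_{H^{k+2}(\Omega)}\|w\|_{1,h}$. Coercivity $a_w(w,w)\ge c\|w\|_{1,h}^2$ then yields $\|w\|_{1,h}\le c^{-1/2}\|w\|_{a_w}$. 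Dividing by $\|w\|_{a_w}$ (the case $w=0$ is trivial) gives
\[
\|w\|_{a_w}\le \|Q_hu-v\|_{a_w}+Ch^{k+1}\|u\|_{H^{k+2}(\Omega)}.
\]
Hence $\|Q_hu-u_h^{(m)}\|_{a_w}\le 2\|Q_hu-v\|_{a_w}+Ch^{k+1}\|u\|_{H^{k+2}(\Omega)}$, and taking the infimum over $v\in W_h^m$ completes the proof.

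The step that needs care is the identity $a_w(e_h,w)=\ell_u(w)$, since $w$ must lie in $W_h^m$ where the discrete equation \eqref{wg-neural} holds. This is guaranteed because both $v$ and $u_h^{(m)}$ belong to $W_h^m$ and $W_h^m$ is a linear space. We also use that $\ell_u$ is defined on all of $V_h^0\supset W_h^m$, so the lemma applies to $w$.

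A second check is that $\|\cdot\|_{a_w}$ really is a norm on $W_h^m$, so that dividing by $\|w\|_{a_w}$ and the norm equivalence are legitimate. This follows from coercivity on $V_h^0$, because $W_h^m\subset V_h^0$. Beyond that, the argument is routine and the constant $C$ is independent of $m$ and of the neural functions.
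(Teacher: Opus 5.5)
Your proof is correct and is essentially the same Strang-type argument as the paper's: it uses the same splitting $Q_hu-u_h^{(m)}=(Q_hu-v)+(v-u_h^{(m)})$, the error equation $a_w(e_h,w)=\ell_u(w)$ on $W_h^m$, the consistency lemma, and coercivity. The only difference is minor: you test with the discrete piece $w=v-u_h^{(m)}$ and close with the triangle inequality, whereas the paper tests with $e_h$ and absorbs via Young's inequality, so your version avoids the absorption step and gives the explicit factor $2$ on the best-approximation term.
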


\begin{proof}
Let $v \in W_h^m$. By coercivity,
\[
c\|e_h\|_{1,h}^2 \le a_w(e_h,e_h).
\]
Using
\[
e_h = (Q_h u - v) + (v - u_h^{(m)}),
\]
we obtain
\[
a_w(e_h,e_h)
=
a_w(e_h,Q_h u-v)+a_w(e_h,v-u_h^{(m)}).
\]
By the error equation,
\[
a_w(e_h,v-u_h^{(m)})=\ell_u(v-u_h^{(m)}).
\]
Hence
\[
c\|e_h\|_{1,h}^2
\le
|a_w(e_h,Q_h u-v)| + |\ell_u(v-u_h^{(m)})|.
\]
By continuity,
\[
|a_w(e_h,Q_h u-v)|
\le
C\|e_h\|_{1,h}\|Q_h u-v\|_{1,h}.
\]
By \eqref{consistency-est},
\[
|\ell_u(v-u_h^{(m)})|
\le
C h^{k+1}\|u\|_{H^{k+2}(\Omega)}\|v-u_h^{(m)}\|_{1,h}.
\]
Using the triangle inequality,
\[
\|v-u_h^{(m)}\|_{1,h}
\le
\|Q_h u-v\|_{1,h}+\|e_h\|_{1,h}.
\]
Combining these bounds yields
\[
\|e_h\|_{1,h}^2
\le
C\Big(
\|e_h\|_{1,h}\|Q_h u-v\|_{1,h}
+
h^{k+1}\|u\|_{H^{k+2}(\Omega)}
(\|Q_h u-v\|_{1,h}+\|e_h\|_{1,h})
\Big).
\]

Let
\[
E:=\|e_h\|_{1,h}, \qquad
A:=\|Q_hu-v\|_{1,h}, \qquad
B:=h^{k+1}\|u\|_{H^{k+2}(\Omega)}.
\]
Then the previous estimate becomes
\[
E^2 \le C(EA+BA+BE).
\]
Applying Young's inequality,
\[
EA \le \varepsilon E^2 + C_\varepsilon A^2,
\qquad
BE \le \varepsilon E^2 + C_\varepsilon B^2,
\qquad
BA \le C(A^2+B^2).
\]
Hence
\[
E^2 \le 2C\varepsilon E^2 + C(A^2+B^2).
\]
Choosing $\varepsilon>0$ sufficiently small and absorbing the term
$2C\varepsilon E^2$ into the left-hand side, we obtain
\[
E^2 \le C(A^2+B^2).
\]
Therefore,
\[
\|e_h\|_{1,h}
\le
C\big(
\|Q_hu-v\|_{1,h}
+
h^{k+1}\|u\|_{H^{k+2}(\Omega)}
\big).
\]

Since $\|\cdot\|_{a_w}$ and $\|\cdot\|_{1,h}$ are equivalent on $V_h^0$, the same estimate holds in the energy norm:
\[
\|e_h\|_{a_w}
\le
C\big(
\|Q_h u-v\|_{a_w}
+
h^{k+1}\|u\|_{H^{k+2}(\Omega)}
\big).
\]
Taking the infimum over $v\in W_h^m$ completes the proof.
\end{proof}

Let
\[
u = u_r + u_s,
\]
where $u_r \in H^{k+2}(\Omega)$ and $u_s$ is a singular component.

Assume that the consistency estimate \eqref{consistency-est} is applied to the regular part $u_r$, while the singular part $u_s$ is handled through the approximation term in the enriched space.

\begin{theorem}\label{thm:singular}
The neural-enhanced WG approximation satisfies
\[
\|Q_h u - u_h^{(m)}\|_{a_w}
\le
C \big(
h^{k+1} \|u_r\|_{H^{k+2}(\Omega)}
+
\inf_{z \in Z_h^m} \|Q_h u_s - z\|_{a_w}
\big).
\]
\end{theorem}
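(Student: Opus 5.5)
The plan is to repeat the argument of Theorem \ref{thm:cea}, choosing the comparison element as $v = Q_h u_r + z$ with $z\in Z_h^m$ arbitrary. This $v$ lies in $W_h^m$ because $Q_h u_r\in V_h^0$. The key bookkeeping is that the approximation error then splits as
\[
Q_h u - v = Q_h u_s - z
\]
by linearity of $Q_h$. So the term $A$ of Theorem \ref{thm:cea} becomes exactly $\|Q_h u_s - z\|_{a_w}$, with no regular-part contribution.

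For the consistency term I would use the standing assumption stated just before the theorem: the consistency estimate \eqref{consistency-est} is applied only to $u_r$, and $u_s$ is absorbed into the approximation term. Concretely, I would write $e_h = (Q_h u - v) + (v - u_h^{(m)})$ and use the Galerkin relation on $W_h^m$ to get
\[
a_w(e_h, v-u_h^{(m)}) = \ell_u(v-u_h^{(m)}), \qquad \ell_u = \ell_{u_r} + \ell_{u_s},
\]
where $\ell_{u_r}(w) = a_w(Q_h u_r,w) - (f_r,w_0)$ with $f_r = -\nabla\cdot(a\nabla u_r)$, and similarly for $u_s$. The piece $\ell_{u_r}$ is bounded by \eqref{consistency-est}:
\[
|\ell_{u_r}(w)| \le C h^{k+1}\|u_r\|_{H^{k+2}(\Omega)}\|w\|_{1,h}.
\]
Then $\ell_{u_s}(v-u_h^{(m)})$ has to be controlled by $\|Q_h u_s - z\|_{a_w}$ times $\|v-u_h^{(m)}\|_{1,h}$, up to terms already present. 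This is where the paper's modelling assumption enters: the singular part contributes only through the approximation term.

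Given these bounds, the rest is identical to Theorem \ref{thm:cea}. Set
\[
E = \|e_h\|_{1,h},\qquad A = \|Q_h u_s - z\|_{1,h},\qquad B = h^{k+1}\|u_r\|_{H^{k+2}(\Omega)}.
\]
The same manipulations give $E^2 \le C(EA + BA + BE)$. Young's inequality and absorption then give $E\le C(A+B)$. Norm equivalence converts this to $\|\cdot\|_{a_w}$, and taking the infimum over $z\in Z_h^m$ finishes the proof.

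The main obstacle is the term $\ell_{u_s}$. Since $u_s\notin H^{k+2}$, the integration-by-parts identity \eqref{s1} still holds formally, provided $a\nabla u_s\cdot n$ has enough trace regularity. However, the bound \eqref{consistency-est} does not apply to it. Without the stated assumption, one would only get a bound by a fractional-order projection error of $\nabla u_s$, not by $\inf_z\|Q_h u_s - z\|_{a_w}$. I would therefore invoke the paper's assumption explicitly at this step. As a fallback, I would record an extra term $\sup_{w}|\ell_{u_s}(w)|/\|w\|_{1,h}$, which the assumption declares to be dominated by the approximation term.
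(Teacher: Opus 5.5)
Your proposal is essentially the paper's proof: the same comparison element $v=Q_hu_r+z\in W_h^m$, the same identity $Q_hu-v=Q_hu_s-z$, and a rerun of the argument of Theorem~\ref{thm:cea} in which the consistency term is charged only to $u_r$ through the standing assumption stated just before the theorem. Your explicit split $\ell_u=\ell_{u_r}+\ell_{u_s}$ makes visible what the paper's one-line appeal leaves implicit, and your diagnosis is correct: the bound on $\ell_{u_s}$ is assumed, not derived, and it cannot come from the enrichment, because $\ell_{u_s}$ involves neither $z$ nor the neural functions (indeed each $\widehat n_i$ already lies in $V_h^0$, so $W_h^m=V_h^0$); without that assumption, what the argument genuinely proves is your fallback bound with the extra term $\sup_{w}|\ell_{u_s}(w)|/\|w\|_{1,h}$.
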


\begin{proof}
For any $z \in Z_h^m$, define
\[
v = Q_h u_r + z \in W_h^m.
\]
Then
\[
Q_h u - v = Q_h u_s - z.
\]
Applying the argument of Theorem \ref{thm:cea}, with the consistency term controlled by $u_r$, gives
\[
\|Q_h u - u_h^{(m)}\|_{a_w}
\le
C\big(
\|Q_h u - v\|_{a_w}
+
h^{k+1}\|u_r\|_{H^{k+2}(\Omega)}
\big).
\]
Substituting $v=Q_h u_r+z$ yields
\[
\|Q_h u - v\|_{a_w}=\|Q_h u_s-z\|_{a_w}.
\]
Taking the infimum over $z\in Z_h^m$ completes the proof.
\end{proof}

\begin{corollary}
If there exists $z_m \in Z_h^m$ such that
\[
\|Q_h u_s - z_m\|_{a_w} \le \varepsilon_m,
\]
then
\[
\|Q_h u - u_h^{(m)}\|_{a_w}
\le
C \big(
h^{k+1} \|u_r\|_{H^{k+2}(\Omega)} + \varepsilon_m
\big).
\]
\end{corollary}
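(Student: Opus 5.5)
The corollary follows directly from Theorem \ref{thm:singular}; no new analysis is needed. Theorem \ref{thm:singular} gives
\[
\|Q_h u - u_h^{(m)}\|_{a_w}
\le
C \big(
h^{k+1} \|u_r\|_{H^{k+2}(\Omega)}
+
\inf_{z \in Z_h^m} \|Q_h u_s - z\|_{a_w}
\big),
\]
with $C$ independent of $m$, $h$ and the chosen neural functions. The plan is to bound the infimum by the value at the particular element $z_m\in Z_h^m$ supplied by the hypothesis:
\[
\inf_{z \in Z_h^m} \|Q_h u_s - z\|_{a_w} \le \|Q_h u_s - z_m\|_{a_w} \le \varepsilon_m .
\]
Substituting this bound yields the claimed estimate with the same constant $C$.

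Two small points need checking so the invocation is legitimate. First, $Q_h u_s$ must be well defined as an element of $V_h^0$, which requires enough trace regularity of $u_s$ (for example $u_s\in H^1_0(\Omega)$, true for corner singularities since $u_s=u-u_r$ with $u\in H^1_0$ under a compatible choice of $u_r$). This is exactly what makes $\|Q_h u_s - z\|_{a_w}$ meaningful in Theorem \ref{thm:singular}. Second, the constant must not depend on $z_m$. This holds because, in the proof of Theorem \ref{thm:singular}, $C$ comes only from the coercivity and continuity of $a_w$ and from Lemma \ref{consistency-est} applied to $u_r$.

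The only step I would examine more closely is how Theorem \ref{thm:singular} itself controls the consistency functional. We have $\ell_u=\ell_{u_r}+\ell_{u_s}$, and the standing assumption says $\ell_{u_s}$ is absorbed through the enriched-space approximation. Since the corollary inherits that assumption verbatim, there is no additional obstacle. Once Theorem \ref{thm:singular} is taken as given, the proof is a one-line monotonicity-of-infimum argument.
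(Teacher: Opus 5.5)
Your proposal is correct and matches the paper's argument; the paper gives no separate proof, since the corollary is just Theorem~\ref{thm:singular} with $\inf_{z\in Z_h^m}\|Q_h u_s - z\|_{a_w}$ bounded by its value at $z_m$. Your caveat that the real gap lies in Theorem~\ref{thm:singular} itself, where the contribution $\ell_{u_s}$ is assumed away rather than estimated, is accurate, but it does not affect the corollary as a consequence of that theorem.
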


\begin{remark}
The error bound depends on the approximation properties of the enriched space $Z_h^m$, which is generated by the neural enrichment procedure. Improved accuracy is achieved when the learned neural basis functions effectively approximate the singular component $u_s$.
\end{remark}


\begin{thebibliography}{99}
 

\bibitem{wg11}{\sc  S. Cao, C. Wang and J. Wang},  {\em A new numerical method for div-curl Systems with Low Regularity Assumptions}, Computers and Mathematics with Applications, vol. 144, pp. 47-59, 2022.
  
 

 \bibitem{wg14}{\sc D. Li, Y. Nie, and C. Wang},  {\em Superconvergence of Numerical Gradient for Weak Galerkin Finite Element Methods on Nonuniform Cartesian Partitions in Three Dimensions}, Computers and Mathematics with Applications, vol 78(3), pp. 905-928, 2019.  
  \bibitem{wg1} {\sc D. Li, C. Wang and J. Wang},  {\em An Extension of the Morley Element on General Polytopal Partitions Using Weak Galerkin Methods}, Journal of Scientific Computing, 100, vol 27, 2024.  
 
 \bibitem{wg2} {\sc D. Li, C. Wang and S. Zhang},  {\em Weak Galerkin methods for elliptic interface problems on curved polygonal partitions}, Journal of Computational and Applied Mathematics, pp. 115995, 2024. 
\bibitem{wg5} {\sc D. Li, C. Wang, J.  Wang and X. Ye},  {\em Generalized weak Galerkin finite element methods for second order elliptic problems}, Journal of Computational and Applied Mathematics, vol. 445, pp. 115833, 2024.

 \bibitem{wg6} {\sc D. Li, C. Wang, J. Wang and S. Zhang},  {\em High Order Morley Elements for Biharmonic Equations on Polytopal Partitions}, Journal of Computational and Applied Mathematics, Vol. 443, pp. 115757, 2024.

 \bibitem{wg7} {\sc D. Li, C. Wang and J. Wang},  {\em Curved Elements in Weak Galerkin Finite Element Methods}, Computers and Mathematics with Applications, Vol. 153, pp. 20-32, 2024.
\bibitem{wg8} {\sc D. Li, C. Wang and J. Wang},  {\em Generalized Weak Galerkin Finite Element Methods for Biharmonic Equations}, Journal of Computational and Applied Mathematics, vol. 434, 115353, 2023.
  \bibitem{wg13}{\sc  D. Li, C. Wang, and J. Wang},  {\em Superconvergence of the Gradient Approximation for Weak Galerkin Finite Element Methods on Rectangular Partitions}, Applied Numerical Mathematics, vol. 150, pp. 396-417, 2020.

 \bibitem{FNO} {\sc Z. Li, N. Kovachki, K. Azizzadenesheli, B. Liu, K. Bhattacharya, A. Stuart, and A. Anandkumar}, {\em Fourier neural operator for parametric partial differential equations}, in Proc. ICLR, 2021.
 
  \bibitem{PINN1} {\sc M. Raissi, P. Perdikaris, and G. E. Karniadakis}, {\em Physics-informed neural networks: A deep learning framework for solving forward and inverse problems involving nonlinear partial differential equations}, J. Comput. Phys., vol. 378, pp. 686-707, 2019.

 \bibitem{PINN2} {\sc G. E. Karniadakis, I. G. Kevrekidis, L. Lu, P. Perdikaris, S. Wang, and L. Yang}, {\em Physics-informed machine learning}, Nat. Rev. Phys., vol. 3, pp. 422-440, 2021.
 
   \bibitem{wg15}{\sc C. Wang},  {\em New Discretization Schemes for Time-Harmonic Maxwell Equations by Weak Galerkin Finite Element Methods}, Journal of Computational and Applied Mathematics, Vol. 341, pp. 127-143, 2018.  
 
   
     
 \bibitem{wg17}{\sc C. Wang and J. Wang},  {\em Discretization of Div-Curl Systems by Weak Galerkin Finite Element Methods on Polyhedral Partitions}, Journal of Scientific Computing, Vol. 68, pp. 1144-1171, 2016.    
   \bibitem{wg19}{\sc C. Wang and J. Wang},  {\em A Hybridized Formulation for Weak Galerkin Finite Element Methods for Biharmonic Equation on Polygonal or Polyhedral Meshes}, International Journal of Numerical Analysis and Modeling, Vol. 12, pp. 302-317, 2015. 
 \bibitem{wg20}{\sc  J. Wang and C. Wang},  {\em Weak Galerkin Finite Element Methods for Elliptic PDEs}, Science China, Vol. 45, pp. 1061-1092, 2015.  
 \bibitem{wg21}{\sc C. Wang and J. Wang},  {\em An Efficient Numerical Scheme for the Biharmonic Equation by Weak Galerkin Finite Element Methods on Polygonal or Polyhedral Meshes}, Journal of Computers and Mathematics with Applications, Vol. 68, 12, pp. 2314-2330, 2014.  
 
   \bibitem{wg18}{\sc C. Wang, J. Wang, R. Wang and R. Zhang},  {\em A Locking-Free Weak Galerkin Finite Element Method for Elasticity Problems in the Primal Formulation}, Journal of Computational and Applied Mathematics, Vol. 307, pp. 346-366, 2016.   

 \bibitem{wg12}{\sc  C. Wang, J. Wang, X. Ye and S. Zhang},  {\em De Rham Complexes for Weak Galerkin Finite Element Spaces}, Journal of Computational and Applied Mathematics, vol. 397, pp. 113645, 2021.
 
 \bibitem{wg3} {\sc C. Wang, J. Wang and S. Zhang},  {\em Weak Galerkin Finite Element Methods for Optimal Control Problems Governed by Second Order Elliptic Partial Differential Equations}, Journal of Computational and Applied Mathematics, in press, 2024. 
 
 \bibitem{itera} {\sc C. Wang, J. Wang and S. Zhang},  {\em A parallel iterative procedure for weak Galerkin methods for second order elliptic problems}, International Journal of Numerical Analysis and Modeling, vol. 21(1), pp. 1-19, 2023.
 \bibitem{wg9} {\sc C. Wang, J. Wang and S. Zhang},  {\em Weak Galerkin Finite Element Methods for Quad-Curl Problems}, Journal of Computational and Applied Mathematics, vol. 428, pp. 115186, 2023.
   \bibitem{wy3655} {\sc J. Wang, and X. Ye}, {\em A weak Galerkin mixed finite element method for second-order elliptic problems}, Math. Comp., vol. 83, pp. 2101-2126, 2014.


  \bibitem{wg4} {\sc C. Wang, X. Ye and S. Zhang},  {\em A Modified weak Galerkin finite element method for the Maxwell equations on polyhedral meshes}, Journal of Computational and Applied Mathematics, vol. 448, pp. 115918, 2024. 
 \bibitem{wg10}{\sc  C. Wang and S. Zhang},  {\em A Weak Galerkin Method for Elasticity Interface Problems}, Journal of Computational and Applied Mathematics, vol. 419, 114726, 2023. 

  
 \bibitem{wg16}{\sc  C. Wang and H. Zhou},  {\em A Weak Galerkin Finite Element Method for a Type of Fourth Order Problem arising from Fluorescence Tomography}, Journal of Scientific Computing, Vol. 71(3), pp. 897-918, 2017.   

    

\end{thebibliography}
\end{document}